\newcommand{\BP}{\mathbf{P}}
\newcommand{\M}{\mathfrak{M}}
\newcommand{\Val}{\text{\rm Val}}
\newcommand{\TVL}{\text{\sf 3VL}}
\newcommand{\TVML}{\text{\sf 3VML}}
\newcommand{\STVMLI}{\text{\sf S3VML--\uppercase\expandafter{\romannumeral1}}}
\newcommand{\STVMLII}{\text{\sf S3VML--\uppercase\expandafter{\romannumeral2}}}
\newcommand{\wK}{\text{\tt wK}}
\newcommand{\one}{\text{\tt\uppercase\expandafter{\romannumeral1}}}
\newcommand{\two}{\text{\tt\uppercase\expandafter{\romannumeral2}}}
\newcolumntype{L}{>{$}l<{$}}
\newcolumntype{R}{>{$}r<{$}}
\newcolumntype{C}{>{$}c<{$}}
\begin{document}

\title{\texorpdfstring{On Three-Valued Modal Logics:\\from a Four-Valued Perspective}{On Three-Valued Modal Logics: from a Four-Valued Perspective}}
\titlerunning{On Three-Valued Modal Logics: from a Four-Valued Perspective}

\author{Xinyu Wang \and Yang Song \and Satoshi Tojo}
\authorrunning{X. Wang et al.}

\institute{School of Information Science, Japan Advanced Institute of Science and Technology\\
Nomi City, Ishikawa Prefecture, 923--1211, Japan}

\maketitle

\begin{abstract}
This paper aims at providing a comprehensive solution to the archaic open problem: how to define semantics of three-valued modal logic with vivid intuitive picture, convincing philosophical justification as well as versatile practical usage. Based on an existing line of work concerned with investigating three-valued logic out of innovative angles of view, we adopt a detour approach to interpret three-valued logic from a four-valued perspective, which results in the invention of an universal and systematic methodology for developing, explaining as well as utilizing three-valued modal logic. We illustrate our method through two concrete cases, one deontic and another epistemic, for both of which a sound and strongly complete natural deduction proof system is also presented in detail. We perceive our three-valued modal logic as a lightweight candidate to merge deontic or epistemic notion into temporal logic, without heavier burden of multiple modalities.

\keywords{Three-valued logic \and Four-valued logic \and Modal logic \and Weak Kleene logic \and Kripke model \and Natural deduction.}
\end{abstract}

\section{Introduction}\label{sec.intro}

Classical propositional logic possesses only two distinct truth values, namely, a valuation function assigns either \textit{True} ($T$) or \textit{False} ($F$) to every basic propositional letter, and hence any logic formula is two-valued as well. In many cases, such kind of logic formalization well captures our na\"{i}ve perception that external facts are absolute, therefore, any statement must be either true or false \textit{objectively}. In other words, \textit{Tertium Non Datur} perpetually holds for each proposition, regardless of \textit{subjectively}, whether we know at present or even will possibly know the proposition's real truth value~\cite{Ditmarsch08}.

Nonetheless, perhaps since our material world never looks perfect, neither does two-valued logic. In fact, ever since intuitionistic logic's discovery, we have witnessed the spawning of a great family of varied non-classical logics~\cite{Fitting94}, which generally aim at compensating myriad aspects of classical logic's deficiencies. Among all sorts of non-classical logics that extend beyond classical propositional logic, three-valued logic ought to serve properly as a solid foundation for us to start working with, as it enlarges two-valued logic minimally in the sense that it simply adds one more extra acceptable truth value. Further, many-valued logics equipped with more than three possible truth values have been proposed as well, however it is not necessarily the more, the better. As a matter of fact, being essentially unlike many-valued logic, which may allow an arbitrary number of different truth values and so actually behaves much similar to universal algebra~\cite{Galatos07}, three-valued logic instead focuses on justifying the third truth value through specific philosophical intuitions. Thus this extra third truth value, depending on its interpretation in the particular scenario, is denoted as $U$ more or less frequently. For instance, $U$ may be admittedly interpreted as \textit{Uncertain}, \textit{Undefined} or \textit{Unknown}~\cite{Hallden49}, and indeed, any of the above meaning is commonly recognized in everyday natural language as another feasible state for a proposition, i.e., some sort of ``middle state'' or ``grey zone'' beside \textit{True} and \textit{False}. Therefore in this paper, we choose three-valued logic to form our work's fundamental basis, for it veritably bears the name ``logic'', which is orthodoxly intended for deductive reasoning rather than algebraic calculi~\cite{Dalen13}.

Another celebrated direction to expand classical logic is to enrich the language with modality, resulting in prosperity of propositional modal logic~\cite{Blackburn06}. Apparently then, here immediately arises such a natural problem: how to combine three-valued logic with modal logic into so-called ``three-valued modal logic''. As innocent the above idea sounds, the major hurdle to a satisfactory solution lies substantially not on technical complexity but instead on philosophical explication, namely, how we should define as well as justify semantics of modality $\Box$ under a three-valued situation. In fact, several primitive attempts to directly generalize the semantics were carried out over the decades~\cite{Segerberg68,Schotch78,Morikawa89,Correia02}, but unfortunately none of the work has been broadly acknowledged as convincing enough among most logicians, and therefore alternative philosophical comprehensions are still being come up with along recent years~\cite{Priest08,Beall16,Szmuc19}. On the other hand, frontal attack may not always be the nearest road to success. For the purpose of clearer philosophical elucidation, sometimes adopting a detour approach presents another practicable measure.

A series of works help to shed light on the above problem. It was first suggested by Suszko in~\cite{Suszko75} that there possibly exists profound association between $n$-valued logic and $m$-valued logic even if $n\neq m$. This idea later got developed further by R\'{e} and Szmuc in~\cite{Re19}. Successively, Song et al. in~\cite{Song19} advocated a novel interpretation for three-valued logic, through a detour of using four-valued logic as intermediate representation. Mathematically speaking, four-valued logic obviously possesses richer expressivity, but far more significantly, it has been found out that four-valued logic is able to naturally simulate many important kinds of three-valued logics, moreover, in an even clearer way that is much friendlier to intuitive understanding than the original three-valued semantics. Such natural simulation exactly provides us with a systematic method to work out semantics of three-valued modal logic. Our obtained three-valued modal logic presents a framework that closely resembles ordinary two-valued modal logic, hence making three-valued modal logic conceptually self-explanatory as well as practically expressive. Further, numerous mathematical properties of two-valued modal logic can also be automatically inherited as corresponding three-valued ones~\cite{Chagrov97}.

The rest of this paper is organized as the following: Section~\ref{sec.pre} introduces mathematical preliminaries, briefly reviewing Song et al.'s work~\cite{Song19} on how to interpret three-valued logic with four-valued auxiliary; Section~\ref{sec.caseI} and Section~\ref{sec.caseII} analyze two different cases of three-valued modal logics with concrete applications, one deontic and another epistemic, respectively, for both of which we provide a sound and strongly complete natural deduction system; Section~\ref{sec.con} concludes the whole paper and discusses over conceivable future research directions.

\section{\texorpdfstring{Three-Valued Propositional Logic \&\\Auxiliary Four-Valued Interpretation}{Three-Valued Propositional Logic and Auxiliary Four-Valued Interpretation}}\label{sec.pre}

Syntax of three-valued propositional logic is quite straightforward:

\begin{definition}[Language $\TVL$]
Given a non-empty countable set of propositional letters $\BP$, well formed formula $A$ in Language $\TVL$ is recursively defined as the following BNF, where $p\in\BP$:

\begin{align*}
    A::=p\mid\neg A\mid A\land A\mid A\lor A
\end{align*}
\end{definition}

\subsubsection*{Important Note} Familiar as the above syntax may seem, readers must pay attention to the fact that logical connectives $\neg$, $\land$ and $\lor$ are all fundamental symbols in Language $\TVL$, rather than abbreviations. Actually as three-valued logic, logical connectives naturally bear different semantics from classical two-valued logic. Hence, in order to avoid any further confusion, we will not introduce logical connective $\to$ in this paper. By the way, at the level of meta-language, namely English in which this paper is written, we still cling to two-valued logic.

Models of three-valued propositional logic are, of course, three-valued. As explained in Section~\ref{sec.intro}, we tentatively follow the routine to denote the third truth value as $U$, in addition to other two standard truth values $T$ and $F$:

\begin{definition}[Three-Valued Propositional Model]
A three-valued propositional model is a three-valued valuation function $V:\BP\to\{T,U,F\}$.
\end{definition}

As for semantics, in fact, several different versions of three-valued propositional logics exist, among which as an instance we would like to introduce weak Kleene logic here, one of the most famous and useful three-valued propositional logics~\cite{Ferguson15}:

\begin{definition}[Weak Kleene Valuation]\label{def.valwK}
Given a fixed three-valued propositional model $V$, for any $\TVL$-formula $A$, its weak Kleene valuation $\Val^V_\wK(A)$ is defined recursively as the following:

\begin{align*}
    \Val^V_\wK(p)=V(p) & & \begin{tabular}{R|CCCC}
        \text{\diagbox{$\Val^V_\wK(A)$}{$\Val^V_\wK(A\land B)$}{$\Val^V_\wK(B)$}} & \phantom{O} & T & U & F\\
        \hline
        & & & &\\
        T\phantom{T} & & T & U & F\\
        U\phantom{U} & & U & U & U\\
        F\phantom{F} & & F & U & F
    \end{tabular}\\
    \begin{tabular}{R|L}
        \Val^V_\wK(A) & \Val^V_\wK(\neg A)\\
        \hline
        &\\
        T\phantom{T} & \phantom{F}F\\
        U\phantom{U} & \phantom{U}U\\
        F\phantom{F} & \phantom{T}T
    \end{tabular} & & \begin{tabular}{R|CCCC}
        \text{\diagbox{$\Val^V_\wK(A)$}{$\Val^V_\wK(A\lor B)$}{$\Val^V_\wK(B)$}} & \phantom{O} & T & U & F\\
        \hline
        & & & &\\
        T\phantom{T} & & T & U & T\\
        U\phantom{U} & & U & U & U\\
        F\phantom{F} & & T & U & F
    \end{tabular}
\end{align*}
\end{definition}

\begin{definition}[Weak Kleene Semantics]\label{def.semwK}
For any three-valued propositional model $V$ and any $\TVL$-formula $A$, $V\vDash_\wK A$ iff $\Val^V_\wK(A)=T$.
\end{definition}

It may sound amazing at first, but an important fact about weak Kleene logic is that there exists no tautology, that is to say, no $\TVL$-formula's weak Kleene valuation always keeps to be $T$ in any three-valued propositional model. Thus, we should not be able to syntactically deduce any theorem of weak Kleene logic, either, and so weak completeness is trivial. Nonetheless, if given a non-empty set of premises, starting from which we may then be able to deduce some other conclusions, hence strong completeness is still meaningful. As a matter of fact, Petrukhin in~\cite{Petrukhin16} has recently proposed a sound and strongly complete natural deduction proof system for weak Kleene logic.

As expounded in Section~\ref{sec.intro}, Song et al. in~\cite{Song19} devises a novel methodology of interpreting three-valued propositional logic with the assistance of four-valued propositional logic. Here please allow us to present only a very brief digest of their work, which is of course not meant to be strictly formal by any means. Define a four-valued propositional model as a four-valued valuation function $V_4:\BP\to\{T_1,F_1\}\times\{T_2,F_2\}$, therefore simply assigning two independent two-valued truth values to every propositional letter (and also to every $\TVL$-formula). The core philosophical idea is that in the finest-grained view, everything is ultimately two-valued, for example, we can ably pick any one out of arbitrarily finite many possible values by just asking a series of \textit{yes/no} questions. Hence the pair of truth values $V_4(p)=(\Val^{V_4}_1(p),\Val^{V_4}_2(p))\in\{T_1,F_1\}\times\{T_2,F_2\}$ just represent two different \textit{yes/no} properties of the ``bundled'' propositional letter $p$, but when we zoom out to a courser-grained view, resolution decreases and $p$ blurs so as to look like one solitary three-valued propositional letter. Thus, the heart of the whole story hitherto settles on semantics of these two two-valued truth values, as well as a ``compression'' function $f_C:\{T_1,F_1\}\times\{T_2,F_2\}\to\{T,U,F\}$.

As for the case of weak Kleene logic, we let $\Val^{V_4}_1$ behave classically:

\begin{align}
    \label{for.begin}\Val^{V_4}_1(\neg A)=T_1 & \iff\Val^{V_4}_1(A)=F_1\\
    \Val^{V_4}_1(A\land B)=T_1 & \iff\Val^{V_4}_1(A)=T_1\text{ and }\Val^{V_4}_1(B)=T_1\\ \Val^{V_4}_1(A\lor B)=T_1 & \iff\Val^{V_4}_1(A)=T_1\text{ or }\Val^{V_4}_1(B)=T_1
\end{align}

We let $\Val^{V_4}_2$ behave \textit{False}-infectiously:

\begin{align}
    \Val^{V_4}_2(\neg A)=F_2 & \iff\Val^{V_4}_2(A)=F_2\\
    \Val^{V_4}_2(A\land B)=F_2 & \iff\Val^{V_4}_2(A)=F_2\text{ or }\Val^{V_4}_2(B)=F_2\\ \label{for.end}\Val^{V_4}_2(A\lor B)=F_2 & \iff\Val^{V_4}_2(A)=F_2\text{ or }\Val^{V_4}_2(B)=F_2
\end{align}

We let $f_C(T_1,T_2)=T$, $f_C(F_1,T_2)=F$, and $f_C(T_1,F_2)=f_C(F_1,F_2)=U$.

It can then be easily verified that the whole definition above surely conforms to weak Kleene valuation in Definition~\ref{def.valwK}. The advantage of this four-valued interpretation is straightforward: with its assistance, we can easily expand three-valued propositional logic onto three-valued modal logic, with ample confidence to philosophically justify our choice of definition for modality $\Box$'s three-valued semantics, since we already have a good intuition about how $\Box$ may act upon a two-valued truth value. Therefore, in the rest of this paper, while sticking to weak Kleene logic as the basic semantics for the propositional fragment of three-valued modal logic, we focus on designing plausible three-valued semantics for modality $\Box$ with the guidance of the above four-valued interpretation.

\section{Three-Valued Modal Logic: Case \uppercase\expandafter{\romannumeral1} (Deontic)}\label{sec.caseI}

To start with, we define syntax and model of three-valued modal logic:

\begin{definition}[Language $\TVML$]
Given a non-empty countable set of propositional letters $\BP$, well formed formula $A$ in Language $\TVML$ is recursively defined as the following BNF, where $p\in\BP$:

\begin{align*}
    A::=p\mid\neg A\mid A\land A\mid A\lor A\mid\Box A
\end{align*}
\end{definition}

\begin{definition}[Three-Valued Kripke Model]
A three-valued Kripke model $\M$ is a triple $(S,R,V)$ where:

\begin{itemize}
    \item $S$ is a non-empty set of possible worlds.
    \item $R\subseteq S\times S$ is a binary relation on $S$.
    \item $V:S\times\BP\to\{T,U,F\}$ is a three-valued valuation function.
\end{itemize}

A pointed model $\M,s$ is a model $\M$ with a possible world $s\in S$.
\end{definition}

Now we face the central problem: to define semantics of three-valued modal logic. As analyzed in Section~\ref{sec.pre}, we take a detour approach by first giving a specific interpretation for the auxiliary four-valued logic. Concerning Case \uppercase\expandafter{\romannumeral1} within this section, we choose to apply a deontological connotation: suppose $A$ is any $\TVL$-formula, for its first two-valued truth value namely $\Val^{V_4}_1(A)$, $T_1$ means the agent is obligated to do $A$, and so $F_1$ means the agent does not have to do $A$; for its second two-valued truth value namely $\Val^{V_4}_2(A)$, $T_2$ means the agent is allowed to do $A$, and so $F_2$ means the agent is forbidden to do $A$. Thus $f_C(T_1,T_2)=T$ means the agent must do $A$, $f_C(F_1,T_2)=F$ means the agent can either do $A$ or not do $A$, and $f_C(T_1,F_2)=f_C(F_1,F_2)=U$ means the agent must not do $A$ since ethically speaking, an immoral deed is afterall immoral even if it is also an obligation, for example, a soldier kills an enemy on the battlefield. Readers can intuitively reason that the above deontic interpretation actually fits quite properly into semantics defined in Equivalences (\ref{for.begin})--(\ref{for.end}). Further we designate a temporal interpretation to modality $\Box$ in Language $\TVML$, then semantics of $\Box$ can be assigned as the following:

\begin{enumerate}
    \item At any possible world, $\Val^{V_4}_1(\Box A)=T_1$ iff on all successors $\Val^{V_4}_1(A)=T_1$, because that the agent must keep doing $A$ all the time in the future is the same as that at any time in the future the agent must be doing $A$.
    \item At any possible world, $\Val^{V_4}_2(\Box A)=T_2$ iff on all successors $\Val^{V_4}_2(A)=T_2$, because that the agent is allowed to keep doing $A$ all the time in the future is the same as that at any time in the future the agent is allowed to do $A$.
\end{enumerate}

The above four-valued semantics can be precisely mapped down to three-valued semantics as the following:

\begin{definition}[Semantics \uppercase\expandafter{\romannumeral1}]\label{def.semI}
Given a fixed three-valued Kripke model $\M$, for any $\TVML$-formula $A$, definition of its valuation for the propositional fragment remains the same as Definition~\ref{def.valwK}, just adding the current possible world $s\in S$ so as to obtain $\Val^\M_\one(s,A)$, while for modality $\Box$:

\begin{align*}
    \Val^\M_\one(s,\Box A)=\left\{\begin{array}{lll}
        T, & & \text{if }\forall sRt,\Val^\M_\one(t,A)=T\\
        U, & & \text{if }\exists sRt,\Val^\M_\one(t,A)=U\\
        F, & & \text{otherwise}
    \end{array}\right.
\end{align*}

Similar to Definition~\ref{def.semwK}, for any three-valued pointed Kripke model $\M,s$ and any $\TVML$-formula $A$, $\M,s\vDash_\one A$ iff $\Val^\M_\one(s,A)=T$.
\end{definition}

Although as explained, we tend to interpret $\Box$ as temporal modality, for the time being let us not consider any particular restrictions over frames and thus firstly work out the most general proof system for normal three-valued modal logic:

\begin{definition}[Proof System $\STVMLI$]
The following natural deduction rules consist of Proof System $\STVMLI$, where for any $\TVML$-formula $A$, $[A]$ means that $A$ gets discharged from the set of premises:

\begin{table}
\centering
\large
\renewcommand{\arraystretch}{1.8}
\setlength{\tabcolsep}{9pt}
\caption{Proof System $\STVMLI$}
\begin{tabular}{LC:LC:LC}
    (EFQ) & \frac{A\quad\neg A}{B} & (\neg\neg I) & \frac{A}{\neg\neg A} & (\neg\neg E) & \frac{\neg\neg A}{A}\\
    (\lor I_1) & \frac{\neg A\land B}{A\lor B} & (\lor I_2) & \frac{A\land\neg B}{A\lor B} & (\lor I_3) & \frac{A\land B}{A\lor B}\\
    (\lor E) & \multicolumn{5}{L}{\frac{\small\renewcommand{\arraystretch}{0.9}\setlength{\arraycolsep}{9pt}\begin{array}{cccc}& [A\land B] & [A\land\neg B] & [\neg A\land B]\\A\lor B & C & C & C\end{array}}{\small C}}\\
    (\land I) & \frac{A\quad B}{A\land B} & (\land E_1) & \frac{A\land B}{A} & (\land E_2) & \frac{A\land B}{B}\\
    (\Box I) & \multicolumn{5}{l}{\normalsize from $\Gamma\vdash_\one A$, deduce $\{\Box B\mid B\in\Gamma\}\vdash_\one\Box A$}\\
    (\neg\lor I) & \frac{\neg A\land\neg B}{\neg(A\lor B)} & (\neg\lor E) & \frac{\neg(A\lor B)}{\neg A\land\neg B} & (\neg\land I) & \frac{\neg A\lor\neg B}{\neg(A\land B)}\\
    (\neg\land E) & \frac{\neg(A\land B)}{\neg A\lor\neg B} & (\neg\Box I) & \frac{\Box(A\lor\neg A)}{\Box A\lor\neg\Box A} & (\neg\Box E) & \frac{\Box A\lor\neg\Box A}{\Box(A\lor\neg A)}
\end{tabular}
\end{table}
\end{definition}

\begin{theorem}[Soundness]\label{th.soundI}
For any set of $\TVML$-formulae $\Gamma$ and any $\TVML$-formula $A$, $\Gamma\vdash_\one A\implies\Gamma\vDash_\one A$.
\end{theorem}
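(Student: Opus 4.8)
The plan is to prove soundness rule by rule, by induction on the tree structure of the derivation witnessing $\Gamma\vdash_\one A$, where $\Gamma\vDash_\one A$ abbreviates ``for every pointed three-valued Kripke model $\M,s$, if $\Val^\M_\one(s,B)=T$ for all $B\in\Gamma$ then $\Val^\M_\one(s,A)=T$.'' The engine of the argument is the four-valued decomposition of Section~\ref{sec.pre}, lifted pointwise to Kripke models: writing $\Val^\M_1(s,A)$ and $\Val^\M_2(s,A)$ for the classical and the \textit{False}-infectious coordinates, one has
\[
\Val^\M_\one(s,A)=T\iff \Val^\M_1(s,A)=T_1\text{ and }\Val^\M_2(s,A)=T_2 .
\]
First I would record the one genuinely new ingredient beyond Section~\ref{sec.pre}: that the modal clause of Definition~\ref{def.semI} is exactly the coordinatewise reading ``$\Val^\M_1(s,\Box A)=T_1$ iff $\Val^\M_1(t,A)=T_1$ on all successors $t$, and $\Val^\M_2(s,\Box A)=T_2$ iff $\Val^\M_2(t,A)=T_2$ on all successors $t$.'' A short case check against the three branches of the clause confirms this, and it reduces every remaining obligation to reasoning separately in a Boolean coordinate and in a conjunctive (\textit{False}-infectious) coordinate.

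Then each propositional rule becomes a short componentwise verification. (EFQ) is vacuous, since no pointed model can make both $A$ and $\neg A$ true (the first coordinate of $A$ would have to be $T_1$ from the one premise and $F_1$ from the other). The rules for $\neg\neg$, $\land$ and the four De~Morgan rules hold because in coordinate~1 they are classical equivalences, while in coordinate~2 negation is transparent and both $\land$ and $\lor$ act conjunctively, so all the stated equivalences survive coordinatewise. The delicate point is the disjunction block: because coordinate~2 treats $\lor$ conjunctively, plain $A\vdash A\lor B$ is unsound, which is precisely why $(\lor I_1)$--$(\lor I_3)$ carry guards forcing the second coordinate of \emph{both} disjuncts to be $T_2$; each guard is exactly strong enough to secure $\Val^\M_2(s,A\lor B)=T_2$ while the first coordinate delivers $\Val^\M_1(s,A\lor B)=T_1$. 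For $(\lor E)$ I would argue that if $\Val^\M_\one(s,A\lor B)=T$ then both second coordinates are $T_2$ and at least one first coordinate is $T_1$, which sorts every model into exactly one of the three cases $A\land B$, $A\land\neg B$, $\neg A\land B$ being true at $s$; applying the induction hypothesis to the corresponding subderivation, with its assumption discharged, then yields $C$.

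For the modal rules, $(\Box I)$ is the necessitation-style step: if every $\Box B$ with $B\in\Gamma$ is true at $s$, then every successor $t$ satisfies all of $\Gamma$, so by the induction hypothesis $\Gamma\vDash_\one A$ gives $\Val^\M_\one(t,A)=T$ at each such $t$, whence $\Val^\M_\one(s,\Box A)=T$. The pair $(\neg\Box I)$/$(\neg\Box E)$ I would dispatch by direct computation: using the coordinate reading of $\Box$, both $\Box(A\lor\neg A)$ and $\Box A\lor\neg\Box A$ are true at $s$ if and only if no successor assigns $A$ the value $U$ (equivalently, every successor has second coordinate $T_2$), so the two formulae are truth-equivalent at every pointed model and each rule preserves truth in both directions.

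I expect the main obstacle to be organisational rather than computational: the induction must be run on derivations, not on formulae, so that the rules with subproofs and discharge, namely $(\lor E)$ and $(\Box I)$, can invoke the induction hypothesis on their premise sequents with the discharged assumptions properly included in the antecedent. The only rules whose soundness is not simply inherited from a classical coordinate are $(\neg\Box I)$ and $(\neg\Box E)$, and these are exactly where the explicit four-valued computation of $\Box$ earns its keep; everything else collapses to the coordinatewise bookkeeping set up at the start.
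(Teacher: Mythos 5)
Your proposal is correct, and it in fact supplies far more than the paper does: the paper's entire proof of Theorem~\ref{th.soundI} is the single sentence ``Soundness can be easily verified,'' so there is no official argument to match against, and any honest rule-by-rule induction on derivations is already more explicit than the original. Your distinctive choice is to run the verification through the four-valued coordinates, lifting $\Val^{V_4}_1$ and $\Val^{V_4}_2$ pointwise to Kripke models; the paper uses that decomposition only as motivating semantics for Definition~\ref{def.semI}, and the implied routine alternative is a direct check of each rule against the three-valued tables and the three-branch clause for $\Box$. Both routes work, and yours buys exactly what you claim: for $(\neg\Box I)$ and $(\neg\Box E)$, the observation that $\Box(A\lor\neg A)$ and $\Box A\lor\neg\Box A$ are each true at $s$ precisely when no successor gives $A$ the value $U$ makes these two rules immediate, your diagnosis of why $(\lor I_1)$--$(\lor I_3)$ carry guards (plain $A\vdash A\lor B$ fails when $B$ has value $U$) is exactly right, and your treatment of $(\lor E)$ and $(\Box I)$ by induction on derivations with discharged assumptions restored to the antecedent is the correct way to organise the meta-rule cases. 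One point you should state as a lemma rather than wave at: $\Val^{\M}_1(s,A)$ is not determined by the three-valued model alone, since an atom with value $U$ refines ambiguously to $(T_1,F_2)$ or $(F_1,F_2)$; you must fix an arbitrary four-valued refinement $V_4$ of $V$ and prove by structural induction that $f_C\bigl(\Val^{\M}_1(s,A),\Val^{\M}_2(s,A)\bigr)=\Val^\M_\one(s,A)$ for all formulae, including the $\Box$ case, where well-definedness holds because whenever two refinements disagree in coordinate~1 at some successor, coordinate~2 of $\Box A$ is $F_2$ and the compressed value is $U$ either way. With that lemma made explicit, your argument is complete and sound.
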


\begin{proof}
Soundness can be easily verified.
\end{proof}

\begin{definition}[Consistency]
A set of $\TVML$-formulae $\Gamma$ is inconsistent in Proof System $\STVMLI$, iff for any $\TVML$-formula $A$, $\Gamma\vdash_\one A$. $\Gamma$ is consistent iff it is not inconsistent.
\end{definition}

\begin{definition}[Maximal Consistent Set]
A set of $\TVML$-formulae $\Gamma$ is a maximal consistent set (abbreviated as MCS) in Proof System $\STVMLI$, iff all the following conditions hold:

\begin{itemize}
    \item $\Gamma$ is consistent.
    \item For any $\TVML$-formula $A$, if $\Gamma\vdash_\one A$ then $A\in\Gamma$.
    \item For any $\TVML$-formula $A\lor B\in\Gamma$, $\{A\land B,A\land\neg B,\neg A\land B\}\cap\Gamma\neq\emptyset$.
\end{itemize}
\end{definition}

\begin{lemma}[Lindenbaum Lemma]
For any set of $\TVML$-formulae $\Gamma$ and any $\TVML$-formula $A$, if $\Gamma\nvdash_\one A$, then there exists an MCS $\Lambda\supseteq\Gamma$ such that $\Lambda\nvdash_\one A$.
\end{lemma}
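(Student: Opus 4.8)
The plan is to construct $\Lambda$ as the union of an increasing chain of formula sets, adjoining formulae one at a time along a fixed enumeration while maintaining throughout the single invariant that the current set does not derive $A$. Since $\BP$ is countable, Language $\TVML$ contains only countably many formulae, so fix an enumeration $B_0,B_1,B_2,\dots$ of all $\TVML$-formulae. Set $\Lambda_0:=\Gamma$; by hypothesis $\Lambda_0\nvdash_\one A$. At stage $n+1$ I inspect $B_n$: if $\Lambda_n\cup\{B_n\}\vdash_\one A$ I discard it and put $\Lambda_{n+1}:=\Lambda_n$; otherwise I keep it, taking $\Lambda_{n+1}:=\Lambda_n\cup\{B_n\}$ and, whenever $B_n$ is itself a disjunction, also adjoining one of its disjunction-witnesses (explained next). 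Finally I set $\Lambda:=\bigcup_n\Lambda_n$.

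The delicate point, and the step I expect to be the main obstacle, is securing the third MCS clause without ever deriving $A$. Suppose at some stage I have just added a disjunction $\phi\lor\psi$, so that $\Lambda_n\cup\{\phi\lor\psi\}\nvdash_\one A$. I must still insert one of the three witnesses $\phi\land\psi$, $\phi\land\neg\psi$, $\neg\phi\land\psi$ while preserving non-derivability of $A$, and I claim at least one choice succeeds. For if, to the contrary, all three of $\Lambda_n\cup\{\phi\lor\psi,\phi\land\psi\}$, $\Lambda_n\cup\{\phi\lor\psi,\phi\land\neg\psi\}$ and $\Lambda_n\cup\{\phi\lor\psi,\neg\phi\land\psi\}$ derived $A$, then reading these three derivations as the three case-branches with discharged assumptions $[\phi\land\psi]$, $[\phi\land\neg\psi]$, $[\neg\phi\land\psi]$, rule $(\lor E)$ applied to $\phi\lor\psi$ would yield $\Lambda_n\cup\{\phi\lor\psi\}\vdash_\one A$, contradicting the invariant. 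Hence some witness may be adjoined and $\Lambda_{n+1}\nvdash_\one A$ is maintained; for non-disjunctive $B_n$ the invariant is immediate from the case split.

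It then remains to verify that $\Lambda$ is as required, which I expect to be routine once the structural properties of natural deduction are in hand, namely monotonicity together with the cut/transitivity property that $\Lambda\vdash_\one B$ and $\Lambda\cup\{B\}\vdash_\one A$ entail $\Lambda\vdash_\one A$. Clearly $\Gamma=\Lambda_0\subseteq\Lambda$. Since every derivation invokes only finitely many premises, any derivation of $A$ from $\Lambda$ would already be a derivation from some $\Lambda_n$, so $\Lambda\nvdash_\one A$; in particular $\Lambda$ is consistent, as an inconsistent set would derive $A$. For deductive closure, if $\Lambda\vdash_\one B$ with $B=B_m$ yet $B\notin\Lambda$, then at stage $m$ we must have had $\Lambda_m\cup\{B\}\vdash_\one A$, whence $\Lambda\cup\{B\}\vdash_\one A$ by monotonicity and then $\Lambda\vdash_\one A$ by cut, a contradiction. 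Finally the disjunction-witness clause holds because any $\phi\lor\psi\in\Lambda$ entered the chain at the unique stage at which it was processed, and a witness was adjoined at exactly that stage. This establishes that $\Lambda$ is an MCS with $\Lambda\supseteq\Gamma$ and $\Lambda\nvdash_\one A$.
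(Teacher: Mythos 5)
Your proof is correct, and it rests on exactly the same key observation as the paper's: if all three witness-extensions $\Lambda_n\cup\{\phi\land\psi\}$, $\Lambda_n\cup\{\phi\land\neg\psi\}$, $\Lambda_n\cup\{\neg\phi\land\psi\}$ derived $A$, then rule $(\lor E)$ would already give $\Lambda_n\vdash_\one A$, so some witness can be adjoined safely. Where you differ is in the architecture of the chain. The paper takes $\Lambda_0=\{B\mid\Gamma\vdash_\one B\}$ and keeps every stage deductively closed, at each step repairing only the $<$-least disjunction in $\Lambda_n$ that lacks a witness; deductive closure of the limit is then immediate, but the construction silently owes a fairness argument, namely that every unwitnessed disjunction (including ones that keep appearing when closures are taken) is eventually the least deficient one and so gets processed. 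Your version is the standard Lindenbaum sweep: start from $\Gamma$ bare, process every formula at its own enumeration stage, adjoin a witness on the spot whenever a disjunction is kept, and recover deductive closure only at the limit via the maximality argument ($\Lambda\vdash_\one B$ and $B\notin\Lambda$ would force $\Lambda_m\cup\{B\}\vdash_\one A$ at $B$'s stage, hence $\Lambda\vdash_\one A$ by monotonicity and cut). This sidesteps the scheduling issue entirely, since each formula has a fixed stage; the small price is that your closure and witness-clause verifications at the end are a bit more delicate, and you handle them correctly --- in particular your observation that witnesses are conjunctions, so no disjunction can enter $\Lambda$ except at its own processing stage, is exactly the point needed to make the third MCS clause go through (note that disjunctions already in $\Gamma$ pass the stage-$m$ test vacuously and so also receive witnesses). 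One caveat applies to both proofs equally: monotonicity and cut must be checked against the sequent-style rule $(\Box I)$, which transforms the whole premise set; the paper uses these structural properties tacitly through its deductive closures, whereas you at least name them as explicit obligations.
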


\begin{proof}
We build $\Lambda$ by induction. As Language $\TVML$ is countable, firstly fix an arbitrary well order $<$ isomorphic to $\omega$ over the set of all $\TVML$-formulae. Let $\Lambda_0=\{B\mid\Gamma\vdash_\one B\}$, it is easy to see that for any $\TVML$-formula $B$, $\Lambda_0\vdash_\one B\iff\Gamma\vdash_\one B$, so $\Lambda_0\nvdash_\one A$, and for any $\TVML$-formula $B$, if $\Lambda_0\vdash_\one B$ then $B\in\Lambda_0$. Now suppose $\Lambda_n$ has been constructed where $n\in\omega$, such that $\Lambda_n\nvdash_\one A$, and that for any $\TVML$-formula $B$, if $\Lambda_n\vdash_\one B$ then $B\in\Lambda_n$. We then construct $\Lambda_{n+1}$ as the following:

\begin{itemize}
    \item If for any $\TVML$-formula $B\lor C\in\Lambda_n$, $\{B\land C,B\land\neg C,\neg B\land C\}\cap\Lambda_n\neq\emptyset$, then let $\Lambda_{n+1}=\Lambda_n$.
    \item Otherwise, suppose $B\lor C\in\Lambda_n$ is the least $\TVML$-formula by the prespecified well order $<$ that does not satisfy the above condition. We claim that, at least one of the following holds: $\Lambda_n\cup\{B\land C\}\nvdash_\one A$, $\Lambda_n\cup\{B\land\neg C\}\nvdash_\one A$, $\Lambda_n\cup\{\neg B\land C\}\nvdash_\one A$. For suppose not, then by Rule $\lor E$ we can easily deduce $\Lambda_n\vdash_\one A$, contradicting the induction hypothesis. Since three cases are just similar, as an instance suppose $\Lambda_n\cup\{B\land C\}\nvdash_\one A$, then let $\Lambda_{n+1}=\{D\mid\Lambda_n\cup\{B\land C\}\vdash_\one D\}$. $\Lambda_{n+1}\nvdash_\one A$, and for any $\TVML$-formula $D$, if $\Lambda_{n+1}\vdash_\one D$ then $D\in\Lambda_{n+1}$.
\end{itemize}

Finally, let $\Lambda=\bigcup\limits_{n\in\omega}\Lambda_n$ be the union of this monotone set sequence. It is quite easy to reason that $\Gamma\subseteq\Lambda$, $\Lambda\nvdash_\one A$, so $\Lambda$ is consistent, for any $\TVML$-formula $B$, if $\Lambda\vdash_\one B$ then $B\in\Lambda$, and for any $\TVML$-formula $B\lor C\in\Lambda$, $\{B\land C,B\land\neg C,\neg B\land C\}\cap\Lambda\neq\emptyset$. Therefore $\Lambda$ is the desired MCS.
\end{proof}

\begin{theorem}[Strong Completeness]\label{th.compI}
For any set of $\TVML$-formulae $\Gamma$ and any $\TVML$-formula $A$, $\Gamma\vDash_\one A\implies\Gamma\vdash_\one A$.
\end{theorem}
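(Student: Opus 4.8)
The plan is to prove the contrapositive: assuming $\Gamma\nvdash_\one A$, I will build a three-valued pointed Kripke model that satisfies every formula of $\Gamma$ while refuting $A$, which witnesses $\Gamma\nvDash_\one A$. By the Lindenbaum Lemma there is an MCS $\Lambda\supseteq\Gamma$ with $\Lambda\nvdash_\one A$, hence $A\notin\Lambda$. I then construct the canonical model $\M^c=(S^c,R^c,V^c)$ in the usual modal style: $S^c$ is the set of all MCSs of $\STVMLI$; $sR^ct$ iff $\{C\mid\Box C\in s\}\subseteq t$; and the three-valued canonical valuation is read off membership by setting $V^c(s,p)=T$ if $p\in s$, $V^c(s,p)=F$ if $\neg p\in s$, and $V^c(s,p)=U$ otherwise. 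This is well defined because $p$ and $\neg p$ cannot both lie in a consistent set, else (EFQ) would collapse it.

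The core of the argument is a Truth Lemma asserting, for every MCS $s$ and every $\TVML$-formula $B$, that $\Val^{\M^c}_\one(s,B)=T\iff B\in s$ and $\Val^{\M^c}_\one(s,B)=F\iff\neg B\in s$; the value $U$ is then forced in the remaining case, consistently, since $B$ and $\neg B$ never both belong to $s$. I would prove this by induction on $B$. The base case is immediate from the definition of $V^c$. The Boolean cases are routine but hinge essentially on the third MCS clause: the weak Kleene tables make $\Val_\one(B\lor C)=T$ exactly when one of the pairs $(T,T),(T,F),(F,T)$ occurs, which the rules $\lor I_1$--$\lor I_3$ together with the disjunction clause of an MCS translate into membership of one of $B\land C$, $B\land\neg C$, $\neg B\land C$; dually, $\neg\lor I/\neg\lor E$, $\neg\land I/\neg\land E$ and $\neg\neg I/\neg\neg E$ dispatch the negated connectives.

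Before the modal case I would isolate the standard Existence Lemma: if $\Box\phi\notin s$ then $\{C\mid\Box C\in s\}\nvdash_\one\phi$, so by Lindenbaum there is a $t$ with $sR^ct$ and $\phi\notin t$. This is purely syntactic and uses only $\Box I$: were $\{C\mid\Box C\in s\}\vdash_\one\phi$, finiteness of derivations would yield $C_1,\dots,C_n$ with each $\Box C_i\in s$ and $C_1,\dots,C_n\vdash_\one\phi$, whence $\Box I$ gives $\{\Box C_1,\dots,\Box C_n\}\vdash_\one\Box\phi$ and closure of $s$ forces $\Box\phi\in s$, a contradiction. The contrapositive reading, namely that $\phi\in t$ for every successor $t$ implies $\Box\phi\in s$, is what I will actually invoke.

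The modal case of the Truth Lemma is where the real work lies and where I expect the main obstacle. For $B=\Box C$, the clause $\Val_\one(s,\Box C)=T\iff\Box C\in s$ follows cleanly: $\Rightarrow$ from the definition of $R^c$ and the inductive $T$-clause for $C$, and $\Leftarrow$ from the Existence Lemma. The genuinely delicate clause is $\Val_\one(s,\Box C)=F\iff\neg\Box C\in s$, because $\Val_\one(s,\Box C)=F$ means ``no successor makes $C$ take value $U$ and some successor makes $C$ take value $F$'', a mixed universal/existential condition that the bare relation $R^c$ does not obviously encode. This is exactly the purpose of $\neg\Box I$ and $\neg\Box E$: since $C\lor\neg C$ has weak Kleene value $T$ precisely when $C$ is not $U$, the formula $\Box(C\lor\neg C)$ serves as a ``definedness'' guard whose valuation coincides with that of $\Box C\lor\neg\Box C$. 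For $\Leftarrow$ I would push $\neg\Box C\in s$ through $\land I$ and $\lor I_1$ to obtain $\Box C\lor\neg\Box C\in s$, apply $\neg\Box E$ to get $\Box(C\lor\neg C)\in s$, and use $R^c$ with the MCS disjunction clause to force $C\in t$ or $\neg C\in t$ at every successor, ruling out value $U$; a successor forcing value $F$ must then exist, for otherwise every successor would contain $C$ and the Existence Lemma would place $\Box C$ in $s$, contradicting consistency. For $\Rightarrow$ I would reverse this: value $F$ guarantees $C\lor\neg C\in t$ at every successor, the Existence Lemma gives $\Box(C\lor\neg C)\in s$, $\neg\Box I$ yields $\Box C\lor\neg\Box C\in s$, and the MCS disjunction clause together with $\Box C\notin s$ (given by the already-proven $T$-clause, since the value is $F\neq T$) leaves only $\neg\Box C\in s$. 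With the Truth Lemma established, $\M^c,\Lambda\vDash_\one\Gamma$ and $\M^c,\Lambda\nvDash_\one A$ are immediate, completing the contrapositive.
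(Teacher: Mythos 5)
Your proposal is correct and follows essentially the same route as the paper's own proof: the contrapositive via the Lindenbaum Lemma, the identical canonical model, a two-sided Truth Lemma tracking both $B$ and $\neg B$, Rule $\Box I$ for the existence step, and the rules $\neg\Box I$/$\neg\Box E$ exploiting $\Box(C\lor\neg C)$ as a definedness guard in the modal case. The only difference is organizational --- you isolate an explicit Existence Lemma and prove the $T$- and $F$-clauses as direct biconditionals, whereas the paper argues by the three exclusive membership cases $\Box B\in\Delta$, $\neg\Box B\in\Delta$, and neither --- which amounts to the same argument.
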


\begin{proof}
Equivalently, we prove that $\Gamma\nvdash_\one A\implies\Gamma\nvDash_\one A$. Suppose $\Gamma\nvdash_\one A$, then by Lindenbaum Lemma there exists an MCS $\Lambda\supseteq\Gamma$ such that $\Lambda\nvdash_\one A$. Build the canonical model $\M$ as $(S,R,V)$ where:

\begin{itemize}
    \item $S$ is the set of all MCSs.
    \item $R\subseteq S\times S$ is a binary relation on $S$ such that for any $\Delta,\Theta\in S$, $\Delta R\Theta$ iff $\forall\Box B\in\Delta$, $B\in\Theta$.
    \item $V:S\times\BP\to\{T,U,F\}$ is a three-valued valuation function such that for any $\Delta\in S$ and any $p\in\BP$, if $p\in\Delta$, then $V(\Delta,p)=T$; if $\neg p\in\Delta$, then $V(\Delta,p)=F$; otherwise, $V(\Delta,p)=U$.
\end{itemize}

We then prove Truth Lemma by structural induction: for any MCS $\Delta\in S$ and any $\TVML$-formula $B$, $\M,\Delta\vDash_\one B$ iff $B\in\Delta$, and $\M,\Delta\vDash_\one\neg B$ iff $\neg B\in\Delta$.

For the basic case, for any MCS $\Delta\in S$ and any propositional letter $p\in\BP$, by definition of valuation function $V$ it is easy to see that $\M,\Delta\vDash_\one p$ iff $p\in\Delta$, and $\M,\Delta\vDash_\one\neg p$ iff $\neg p\in\Delta$.

For the inductive step, due to limited space here we only demonstrate the crucial case concerning modality $\Box$. As induction hypothesis, suppose that for any MCS $\Delta\in S$ and for certain fixed $\TVML$-formula $B$, $\M,\Delta\vDash_\one B$ iff $B\in\Delta$, and $\M,\Delta\vDash_\one\neg B$ iff $\neg B\in\Delta$.

\begin{itemize}
    \item If $\Box B\in\Delta$, then for any MCS $\Theta\in S$ such that $\Delta R\Theta$, $B\in\Theta$. By induction hypothesis $\M,\Theta\vDash_\one B$, so $\M,\Delta\vDash_\one\Box B$.
    \item If $\neg\Box B\in\Delta$, on the one hand, by Rules $\land I$ and $\lor I_1$ we have $\Box B\lor\neg\Box B\in\Delta$, and so by Rule $\neg\Box E$ we have $\Box(B\lor\neg B)\in\Delta$, thus for any MCS $\Theta\in S$ such that $\Delta R\Theta$, $B\lor\neg B\in\Theta$, and because $\Theta$ is an MCS, $B\in\Theta$ or $\neg B\in\Theta$, so by induction hypothesis we have $\M,\Theta\vDash_\one B$ or $\M,\Theta\vDash_\one\neg B$. On the other hand, as $\Box B\notin\Delta$, by Rule $\Box I$ we have $\{C\mid\Box C\in\Delta\}\nvdash_\one B$, and by Lindenbaum Lemma there exists an MCS $\Theta\in S$ such that $\{C\mid\Box C\in\Delta\}\subseteq\Theta$ and that $\Theta\nvdash_\one B$, so $\Delta R\Theta$ and $B\notin\Theta$, by induction hypothesis $\M,\Theta\nvDash_\one B$, thus $\M,\Theta\vDash_\one\neg B$. Hence $\M,\Delta\vDash_\one\neg\Box B$.
    \item If $\Box B\notin\Delta$ and $\neg\Box B\notin\Delta$, also because $\Delta$ is an MCS, by Rule $\neg\Box I$ we can reason that $\Box(B\lor\neg B)\notin\Delta$, similarly as above there exists an MCS $\Theta\in S$ such that $\Delta R\Theta$ and that $B\lor\neg B\notin\Theta$, so $B\notin\Theta$ and $\neg B\notin\Theta$, by induction hypothesis $\M,\Theta\nvDash_\one B$ and $\M,\Theta\nvDash_\one\neg B$, so $\M,\Delta\nvDash_\one\Box B$ and $\M,\Delta\nvDash_\one\neg\Box B$.
\end{itemize}

In all, we obtain that $\M,\Delta\vDash_\one\Box B$ iff $\Box B\in\Delta$, and $\M,\Delta\vDash_\one\neg\Box B$ iff $\neg\Box B\in\Delta$.

Therefore since $\Lambda$ is an MCS and $\Lambda\nvdash_\one A$, $A\notin\Lambda$, so by Truth Lemma $\M,\Lambda\nvDash_\one A$, but $\Gamma\subseteq\Lambda$ so by Truth Lemma $\M,\Lambda\vDash_\one\Gamma$, hence $\Gamma\nvDash_\one A$.
\end{proof}

Finally we conclude:

\begin{theorem}[Soundness and Strong Completeness]
Under Semantics \text{\rm\uppercase\expandafter{\romannumeral1}} in Definition~\ref{def.semI}, Proof System $\STVMLI$ is sound and strongly complete with respect to the class of all three-valued Kripke frames.
\end{theorem}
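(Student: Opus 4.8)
The plan is to read the final statement as exactly the conjunction of the two directions already established, so that no fresh argument is needed beyond assembling them. The soundness half, $\Gamma\vdash_\one A\implies\Gamma\vDash_\one A$, is precisely Theorem~\ref{th.soundI}; the strong completeness half, $\Gamma\vDash_\one A\implies\Gamma\vdash_\one A$, is precisely Theorem~\ref{th.compI}. Thus I would first record that the two implications together yield the biconditional $\Gamma\vdash_\one A\iff\Gamma\vDash_\one A$, which is exactly the claim that Proof System $\STVMLI$ is sound and strongly complete.

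The one thing genuinely worth checking is that the quantifier phrase ``with respect to the class of all three-valued Kripke frames'' matches the class over which $\vDash_\one$ was defined in Definition~\ref{def.semI}, i.e.\ that neither proof secretly assumed a frame condition. For soundness I would confirm that each rule was validated over arbitrary pointed models; the only delicate rules are the modal ones $(\Box I)$, $(\neg\Box I)$ and $(\neg\Box E)$, whose verification appeals to the $\Box$-clause of Definition~\ref{def.semI}, and that clause quantifies over all $t$ with $sRt$ without demanding any property of $R$ such as reflexivity, transitivity or seriality. Hence their truth-preservation holds uniformly across every frame.

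For completeness I would point to the canonical model $\M=(S,R,V)$ built in the proof of Theorem~\ref{th.compI}: its accessibility relation is the standard canonical one, $\Delta R\Theta\iff\forall\Box B\in\Delta,\ B\in\Theta$, which carries no constraint, so $\M$ itself already belongs to the class of all three-valued Kripke frames. Consequently the countermodel witnessing $\Gamma\nvDash_\one A$ is a legitimate member of that class, and strong completeness over the full class follows.

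Finally, since the statement adds nothing beyond the two prior theorems, there is no real obstacle here; the only subtlety — and the step I would guard against slipping past — is the bookkeeping that the idiosyncratic three-valued modal rules $(\neg\Box I)$ and $(\neg\Box E)$ are sound on every frame and that the canonical $R$ needs no special structure. Once that alignment is confirmed, the theorem is immediate as the conjunction of Theorem~\ref{th.soundI} and Theorem~\ref{th.compI}.
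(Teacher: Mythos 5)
Your proposal is correct and matches the paper exactly: the paper's own proof is the one-liner ``By Theorem~\ref{th.soundI} and Theorem~\ref{th.compI}.'' Your additional bookkeeping---verifying that the modal rules are validated without frame conditions and that the canonical relation $\Delta R\Theta\iff\forall\Box B\in\Delta,\ B\in\Theta$ imposes no constraint---is sound diligence rather than a different route.
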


\begin{proof}
By Theorem~\ref{th.soundI} and Theorem~\ref{th.compI}.
\end{proof}

\begin{remark}
Considering that $\Box$ is interpreted as temporal modality, in fact with respect to the class of three-valued S4 Kripke frames, through very similar reasoning we can establish a sound and strongly complete proof system by replacing Rule $\Box I$ in Proof System $\STVMLI$ with the following Rule $\Box I'$, and also adding the following new rule $\Box E$:

\begin{table}
\centering
\large
\renewcommand{\arraystretch}{0.9}
\setlength{\tabcolsep}{9pt}
\begin{tabular}{lc:lc}
    ($\Box I'$) & \makecell[l]{\normalsize from $\{\Box B\mid B\in\Gamma\}\vdash_\one A$,\\\normalsize deduce$\{\Box B\mid B\in\Gamma\}\vdash_\one\Box A$} & ($\Box E$) & $\frac{\Box A}{A}$
\end{tabular}
\end{table}
\end{remark}

\section{Three-Valued Modal Logic: Case \uppercase\expandafter{\romannumeral2} (Epistemic)}\label{sec.caseII}

As for Case \uppercase\expandafter{\romannumeral2} within this section, through the similar detour approach, we start with providing a cognitive interpretation for the auxiliary four-valued logic: suppose $A$ is any $\TVML$-formula, for its first two-valued truth value namely $\Val^{V_4}_1(A)$, $T_1$ means objectively $A$ is true and so $F_1$ means $A$ is false, just as classical two-valued logic; for its second two-valued truth value namely $\Val^{V_4}_2(A)$, $T_2$ means the agent understands $A$, and so $F_2$ means the agent does not understand $A$. Thus $f_C(T_1,T_2)=T$ means the agent understands $A$ is true, $f_C(F_1,T_2)=F$ means the agent understands $A$ is false, and $f_C(T_1,F_2)=f_C(F_1,F_2)=U$ means the agent does not understand $A$ since under such a circumstance, it is sheer nonsense for the agent to talk about truth value of some statement that he does not even understand at all. Readers can intuitively reason that the above epistemic interpretation actually fits quite properly into semantics defined in Equivalences (\ref{for.begin})--(\ref{for.end}). Further we designate a temporal interpretation to modality $\Box$ in Language $\TVML$, then semantics of $\Box$ can be assigned as the following:

\begin{enumerate}
    \item At any possible world, $\Val^{V_4}_1(\Box A)=T_1$ iff on all successors $\Val^{V_4}_1(A)=T_1$, the same as classical modal logic.
    \item At any possible world, $\Val^{V_4}_2(\Box A)=T_2$ iff on the very same possible world $\Val^{V_4}_2(A)=T_2$, because understanding the meaning of a sentence depends solely on status quo, regardless whether the sentence itself talks about past, present or future.
\end{enumerate}

Moreover, it can be reasonably assumed that the agent never forgets his knowledge, so that once he understands the meaning of a propositional letter, he will then always understand it in the future~\cite{Wang13}. Namely, the following restriction should be put onto the Kripke model:

\begin{enumerate}[start=3]
    \item For any propositional letter $p\in\BP$, at any possible world if $\Val^{V_4}_2(p)=T_2$, then on all successors $\Val^{V_4}_2(p)=T_2$.
\end{enumerate}

This restriction can be precisely mapped down to restriction on three-valued Kripke model as the following:

\begin{definition}[Three-Valued Kripke Model--\uppercase\expandafter{\romannumeral2}]
A three-valued Kripke model--\text{\rm\uppercase\expandafter{\romannumeral2}} is a three-value Kripke model such that for any $s\in S$ and any $p\in\BP$, $V(s,p)=U\implies\forall tRs,V(t,p)=U$.
\end{definition}

And within this restricted class of three-valued Kripke models--\uppercase\expandafter{\romannumeral2}, the above four-valued semantics can be precisely mapped down to three-valued semantics as the following:

\begin{definition}[Semantics \uppercase\expandafter{\romannumeral2}]\label{def.semII}
Given a fixed three-valued Kripke model--\text{\rm\uppercase\expandafter{\romannumeral2}} $\M_\two$, for any $\TVML$-formula $A$, definition of its valuation for the propositional fragment remains the same as Definition~\ref{def.valwK}, just adding the current possible world $s\in S$ so as to obtain $\Val^{\M_\two}_\two(s,A)$, while for modality $\Box$:

\begin{align*}
    \Val^{\M_\two}_\two(s,\Box A)=\left\{\begin{array}{lll}
        T, & & \text{if }\Val^{\M_\two}_\two(s,A)\neq U\text{ and }\forall sRt,\Val^{\M_\two}_\two(t,A)=T\\
        U, & & \text{if }\Val^{\M_\two}_\two(s,A)=U\\
        F, & & \text{otherwise}
    \end{array}\right.
\end{align*}

Similar to Definition~\ref{def.semwK}, for any three-valued pointed Kripke model--\text{\rm\uppercase\expandafter{\romannumeral2}} $\M_\two,s$ and any $\TVML$-formula $A$, $\M_\two,s\vDash_\two A$ iff $\Val^{\M_\two}_\two(s,A)=T$.
\end{definition}

Although as explained, we tend to interpret $\Box$ as temporal modality, for the time being let us not consider any particular restrictions over frames and thus firstly work out the most general proof system for normal three-valued modal logic:

\begin{definition}[Proof System $\STVMLII$]
The following natural deduction rules consist of Proof System $\STVMLII$, where for any $\TVML$-formula $A$, $[A]$ means that $A$ gets discharged from the set of premises:

\begin{table}
\centering
\large
\renewcommand{\arraystretch}{1.8}
\setlength{\tabcolsep}{9pt}
\caption{Proof System $\STVMLII$}
\begin{tabular}{lc:lc:lc}
    ($EFQ$) & $\frac{A\quad\neg A}{B}$ & ($\neg\neg I$) & $\frac{A}{\neg\neg A}$ & ($\neg\neg E$) & $\frac{\neg\neg A}{A}$\\
    ($\lor I_1$) & $\frac{\neg A\land B}{A\lor B}$ & ($\lor I_2$) & $\frac{A\land\neg B}{A\lor B}$ & ($\lor I_3$) & $\frac{A\land B}{A\lor B}$\\
    ($\lor E$) & \multicolumn{5}{l}{$\frac{\small\renewcommand{\arraystretch}{0.9}\setlength{\arraycolsep}{9pt}\begin{array}{cccc}& [A\land B] & [A\land\neg B] & [\neg A\land B]\\A\lor B & C & C & C\end{array}}{\small C}$}\\
    ($\land I$) & $\frac{A\quad B}{A\land B}$ & ($\land E_1$) & $\frac{A\land B}{A}$ & ($\land E_2$) & $\frac{A\land B}{B}$\\
    ($\Box I_1$) & \multicolumn{3}{l:}{\makecell[l]{\small from $\Gamma\vdash_\two A$, deduce\\\small$\{A\lor\neg A\}\cup\{\Box B\mid B\in\Gamma\}\vdash_\two\Box A$}} & ($\Box I_2$) & $\frac{A\lor\neg A}{\Box(A\lor\neg A)}$\\
    ($\neg\lor I$) & $\frac{\neg A\land\neg B}{\neg(A\lor B)}$ & ($\neg\lor E$) & $\frac{\neg(A\lor B)}{\neg A\land\neg B}$ & ($\neg\land I$) & $\frac{\neg A\lor\neg B}{\neg(A\land B)}$\\
    ($\neg\land E$) & $\frac{\neg(A\land B)}{\neg A\lor\neg B}$ & ($\neg\Box I$) & $\frac{A\lor\neg A}{\Box A\lor\neg\Box A}$ & ($\neg\Box E$) & $\frac{\Box A\lor\neg\Box A}{A\lor\neg A}$
\end{tabular}
\end{table}
\end{definition}

\begin{theorem}[Soundness]\label{th.soundII}
For any set of $\TVML$-formulae $\Gamma$ and any $\TVML$-formula $A$, $\Gamma\vdash_\two A\implies\Gamma\vDash_\two A$.
\end{theorem}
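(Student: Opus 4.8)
The plan is to prove soundness by induction on the height of the derivation witnessing $\Gamma\vdash_\two A$, showing at each step that the rule applied preserves the semantic consequence relation $\vDash_\two$. I read $\Gamma\vDash_\two A$ as: in every pointed three-valued Kripke model--II $\M_\two,s$ at which every premise in $\Gamma$ takes value $T$, the conclusion $A$ also takes value $T$. For the rules without discharged assumptions it suffices to fix an arbitrary $\M_\two,s$, assume every premise evaluates to $T$ at $s$, and compute via Definition~\ref{def.valwK} and Definition~\ref{def.semII} that the conclusion evaluates to $T$ at $s$ as well; for the rule with discharge ($\lor E$) and the one with a hypothetical subderivation ($\Box I_1$) I would instead invoke the inductive hypothesis on the subderivations.

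Throughout I would lean on a few elementary weak Kleene facts, all immediate from Definition~\ref{def.valwK}: $\Val^{\M_\two}_\two(s,A\lor\neg A)=T$ exactly when $\Val^{\M_\two}_\two(s,A)\neq U$ (and this disjunction is never $F$); negation, conjunction and disjunction each return $U$ iff some immediate subformula is $U$; and a disjunction evaluates to $T$ iff both disjuncts are defined with at least one of them true. With these in hand the propositional rules are routine one-line verifications. For instance $(EFQ)$ is vacuously sound, since $\Val^{\M_\two}_\two(s,A)=T$ and $\Val^{\M_\two}_\two(s,\neg A)=T$ cannot hold simultaneously; and $(\lor E)$ is sound because $\Val^{\M_\two}_\two(s,A\lor B)=T$ forces $A,B$ to be defined with at least one true, so exactly one of $A\land B$, $A\land\neg B$, $\neg A\land B$ holds at $s$, whence the corresponding subderivation delivers $C$ at $s$.

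The genuinely modal content sits in the four $\Box$-rules, and here I would first isolate the one auxiliary fact that makes the restriction to models--II pay off, namely a \emph{definedness-propagation lemma}: in any three-valued Kripke model--II, for every formula $B$, if $\Val^{\M_\two}_\two(s,B)\neq U$ and $sRt$ then $\Val^{\M_\two}_\two(t,B)\neq U$. This I would prove by structural induction on $B$; the base case is precisely the model--II restriction $V(t,p)=U\implies\forall uRt,V(u,p)=U$ applied at $t$ with predecessor $s$ and read contrapositively, while the connective and $\Box$ cases follow from the definedness facts above together with the observation that $\Val^{\M_\two}_\two(s,\Box B)\neq U$ iff $\Val^{\M_\two}_\two(s,B)\neq U$. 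Granting this lemma, $(\Box I_2)$ is sound: the premise $A\lor\neg A$ gives $\Val^{\M_\two}_\two(s,A)\neq U$, the lemma propagates definedness to every successor $t$, so $\Val^{\M_\two}_\two(t,A\lor\neg A)=T$ for all $sRt$ while the root itself is defined, which is exactly the clause yielding $\Val^{\M_\two}_\two(s,\Box(A\lor\neg A))=T$. For $(\Box I_1)$ I would use the inductive hypothesis $\Gamma\vDash_\two A$: each premise $\Box B$ being $T$ at $s$ forces $B$ to be $T$ at every successor, so $\Gamma$ holds throughout the successors and hence $A$ is $T$ there, while the premise $A\lor\neg A$ secures $\Val^{\M_\two}_\two(s,A)\neq U$, together giving $\Val^{\M_\two}_\two(s,\Box A)=T$. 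Finally $(\neg\Box I)$ and $(\neg\Box E)$ both reduce to the single equivalence $\Val^{\M_\two}_\two(s,\Box A)\neq U\iff\Val^{\M_\two}_\two(s,A)\neq U$.

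I expect the main obstacle to be the definedness-propagation lemma and its deployment in $(\Box I_2)$: this is the only place where the ``no-forgetting'' restriction defining models--II is actually consumed, and getting the direction of the restriction right (definedness must travel \emph{forward} along $R$, which is exactly the contrapositive of the stated backward condition on predecessors) is the one point where a misread of the quantifier would silently break the argument. Everything else amounts to a finite, mechanical case analysis over the weak Kleene tables.
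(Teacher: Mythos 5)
Your proposal is correct, and it is essentially the rule-by-rule semantic verification that the paper compresses into the single line ``Soundness can be easily verified'' --- you have simply supplied the details the authors omit. The one genuinely non-obvious ingredient, which you rightly isolate, is the definedness-propagation lemma for models--\uppercase\expandafter{\romannumeral2} (that $\Val^{\M_\two}_\two(s,B)\neq U$ and $sRt$ imply $\Val^{\M_\two}_\two(t,B)\neq U$, proved by structural induction from the contrapositive of the model--\uppercase\expandafter{\romannumeral2} condition on propositional letters): this is exactly what makes Rule $\Box I_2$ sound, and your treatment of it, including the direction of the quantifier in the restriction and the equivalence $\Val^{\M_\two}_\two(s,\Box A)\neq U\iff\Val^{\M_\two}_\two(s,A)\neq U$ used for $\neg\Box I$ and $\neg\Box E$, is accurate.
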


\begin{proof}
Soundness can be easily verified.
\end{proof}

Consistency and maximal consistent set are defined in the same way as Case \uppercase\expandafter{\romannumeral1} in Section~\ref{sec.caseI}, so can Lindenbaum Lemma be proved as well. All we need to do is only modifying part of proof for strong completeness:

\begin{theorem}[Strong Completeness]\label{th.compII}
For any set of $\TVML$-formulae $\Gamma$ and any $\TVML$-formula $A$, $\Gamma\vDash_\two A\implies\Gamma\vdash_\two A$.
\end{theorem}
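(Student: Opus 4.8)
The plan is to follow the same canonical-model strategy as in Theorem~\ref{th.compI}, adapting only the places where Semantics~\uppercase\expandafter{\romannumeral2} and the boxed rules of $\STVMLII$ differ. Since consistency, maximal consistent sets and the Lindenbaum Lemma transfer verbatim from Case~\uppercase\expandafter{\romannumeral1}, I would first pass to the contrapositive: assuming $\Gamma\nvdash_\two A$, the Lindenbaum Lemma yields an MCS $\Lambda\supseteq\Gamma$ with $\Lambda\nvdash_\two A$. I then build the canonical model $\M_\two=(S,R,V)$ exactly as before, with $S$ the set of all MCSs, $\Delta R\Theta$ iff every $\Box C\in\Delta$ has $C\in\Theta$, and $V(\Delta,p)$ reading off $p$ and $\neg p$ from $\Delta$.

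The genuinely new obligation, absent from Case~\uppercase\expandafter{\romannumeral1}, is to verify that this canonical model actually lies in the restricted class, i.e.\ that it is a three-valued Kripke model--\uppercase\expandafter{\romannumeral2}: whenever $V(\Delta,p)=U$ and $\Xi R\Delta$, we need $V(\Xi,p)=U$. I would prove the contrapositive. If $p\in\Xi$ (the case $\neg p\in\Xi$ being symmetric), then $\Xi\vdash_\two p\lor\neg p$ via $\neg\neg I$, $\land I$ and $\lor I_2$, so $p\lor\neg p\in\Xi$; Rule $\Box I_2$ then forces $\Box(p\lor\neg p)\in\Xi$, whence $p\lor\neg p\in\Delta$ by the definition of $R$. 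The MCS disjunction condition applied to $p\lor\neg p$ leaves only the consistent possibilities $p\land\neg\neg p\in\Delta$ or $\neg p\land\neg p\in\Delta$, each of which puts $p$ or $\neg p$ into $\Delta$, i.e.\ $V(\Delta,p)\neq U$. This is exactly where $\Box I_2$ and $\neg\Box E$ earn their keep, and it is the step I expect to carry the most conceptual weight.

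Next I would prove the Truth Lemma $\M_\two,\Delta\vDash_\two B\iff B\in\Delta$ together with $\M_\two,\Delta\vDash_\two\neg B\iff\neg B\in\Delta$ by structural induction, the atomic, Boolean and negation-propagation cases being identical to Case~\uppercase\expandafter{\romannumeral1}; only the $\Box$ step requires care, since it must now track the extra ``definedness'' clause $\Val^{\M_\two}_\two(\Delta,B)\neq U$ built into Semantics~\uppercase\expandafter{\romannumeral2}. I would split on the membership of $\Box B$. If $\Box B\in\Delta$, then $R$ gives $\Val^{\M_\two}_\two(\Theta,B)=T$ at every successor $\Theta$, while $\Box B\vdash_\two\Box B\lor\neg\Box B$ followed by $\neg\Box E$ gives $B\lor\neg B\in\Delta$, so the MCS condition forces $\Val^{\M_\two}_\two(\Delta,B)\neq U$; together these yield $\Val^{\M_\two}_\two(\Delta,\Box B)=T$. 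If $\neg\Box B\in\Delta$, the same use of $\neg\Box E$ again secures $B\lor\neg B\in\Delta$ hence $\Val^{\M_\two}_\two(\Delta,B)\neq U$, and I claim $\{C\mid\Box C\in\Delta\}\nvdash_\two B$, since otherwise Rule $\Box I_1$---whose side premise $B\lor\neg B$ is now available in $\Delta$---would put $\Box B\in\Delta$, contradicting consistency; Lindenbaum then supplies a successor $\Theta$ with $B\notin\Theta$ and $\Val^{\M_\two}_\two(\Theta,B)\neq T$, giving $\Val^{\M_\two}_\two(\Delta,\Box B)=F$. If neither $\Box B$ nor $\neg\Box B$ lies in $\Delta$, I would show $B\notin\Delta$ and $\neg B\notin\Delta$: assuming either would give $B\lor\neg B\in\Delta$, and Rule $\neg\Box I$ would force $\Box B\lor\neg\Box B\in\Delta$, whose MCS condition reinserts $\Box B$ or $\neg\Box B$, a contradiction; hence $\Val^{\M_\two}_\two(\Delta,B)=U$ and so $\Val^{\M_\two}_\two(\Delta,\Box B)=U$, which is neither $T$ nor $F$.

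Finally, as in Case~\uppercase\expandafter{\romannumeral1}, since $\Lambda$ is an MCS with $\Lambda\nvdash_\two A$ we get $A\notin\Lambda$, so the Truth Lemma gives $\M_\two,\Lambda\nvDash_\two A$ while $\Gamma\subseteq\Lambda$ gives $\M_\two,\Lambda\vDash_\two\Gamma$; because $\M_\two$ has been shown to be a genuine Kripke model--\uppercase\expandafter{\romannumeral2}, this witnesses $\Gamma\nvDash_\two A$. The main obstacle is the interplay, throughout the $\Box$ step, between the syntactic certificate $B\lor\neg B$ of ``$B$ is defined'' and the semantic side condition $\Val^{\M_\two}_\two(\cdot,B)\neq U$: aligning the three boxed rules $\Box I_1$, $\neg\Box I$, $\neg\Box E$ with the three semantic outcomes $T$, $F$, $U$ is the crux, whereas the canonical construction itself is routine once the model is confirmed to satisfy the never-forgetting restriction.
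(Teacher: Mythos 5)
Your proposal is correct and takes essentially the same route as the paper's own proof: the same canonical model, the same use of Rule $\Box I_2$ to verify the model--\uppercase\expandafter{\romannumeral2} restriction, and the same three-way case split in the $\Box$ step of the Truth Lemma, aligning $\neg\Box E$, $\Box I_1$ and $\neg\Box I$ with the certificates $B\lor\neg B$ exactly as the paper does. (One cosmetic slip: $\neg\Box E$ plays no role in the restriction check itself, only later in the Truth Lemma.)
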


\begin{proof}
Quite similar to proof of Theorem~\ref{th.compI}, we only need to revise two parts.

Firstly, we have to verify that the canonical model, which we now denote as $\M_\two$, is indeed a three-valued Kripke model--\uppercase\expandafter{\romannumeral2}. For any $\Delta\in S$ and any $p\in\BP$ so that $V(\Delta,p)=U$, to a contradiction suppose there exists $\Theta\in S$ such that $\Theta R\Delta$ and that $V(\Theta,p)\neq U$, then $V(\Theta,p)=T$ or $V(\Theta,p)=F$. Therefore by definition of three-valued valuation function $V$, we have $p\in\Theta$ or $\neg p\in\Theta$ so $p\lor\neg p\in\Theta$, by Rule $\Box I_2$ we have $\Box(p\lor\neg p)\in\Theta$, hence $p\lor\neg p\in\Delta$, and because $\Delta$ is an MCS, $p\in\Delta$ or $\neg p\in\Delta$, contradicting that $V(\Delta,p)=U$.

Secondly, we modify proof in the inductive step concerning modality $\Box$ as:

\begin{itemize}
    \item If $\Box B\in\Delta$, on the one hand, by Rules $\neg\neg I$, $\land I$ and $\lor I_2$ we have $\Box B\lor\neg\Box B\in\Delta$, and so by Rule $\neg\Box E$ we have $B\lor\neg B\in\Delta$, because $\Delta$ is an MCS, $B\in\Delta$ or $\neg B\in\Delta$, by induction hypothesis $\M_\two,\Delta\vDash_\two B$ or $\M_\two,\Delta\vDash_\two\neg B$. On the other hand, for any MCS $\Theta\in S$ such that $\Delta R\Theta$, $B\in\Theta$, by induction hypothesis $\M_\two,\Theta\vDash_\two B$. Hence $\M_\two,\Delta\vDash_\two\Box B$.
    \item If $\neg\Box B\in\Delta$, on the one hand, by Rules $\land I$ and $\lor I_1$ we have $\Box B\lor\neg\Box B\in\Delta$, and so by Rule $\neg\Box E$ we have $B\lor\neg B\in\Delta$, because $\Delta$ is an MCS, $B\in\Delta$ or $\neg B\in\Delta$, by induction hypothesis $\M_\two,\Delta\vDash_\two B$ or $\M_\two,\Delta\vDash_\two\neg B$. On the other hand, as $B\lor\neg B\in\Delta$ and $\Box B\notin\Delta$, by Rule $\Box I_1$ we have $\{C\mid\Box C\in\Delta\}\nvdash_\two B$, and by Lindenbaum Lemma there exists an MCS $\Theta\in S$ such that $\{C\mid\Box C\in\Delta\}\subseteq\Theta$ and that $\Theta\nvdash_\two B$, so $\Delta R\Theta$ and $B\notin\Theta$, by induction hypothesis $\M_\two,\Theta\nvDash_\two B$. Hence $\M_\two,\Delta\vDash_\two\neg\Box B$.
    \item If $\Box B\notin\Delta$ and $\neg\Box B\notin\Delta$, also because $\Delta$ is an MCS, by Rule $\neg\Box I$ we reason $B\lor\neg B\notin\Delta$, so $B\notin\Delta$ and $\neg B\notin\Delta$, by induction hypothesis $\M_\two,\Delta\nvDash_\two B$ and $\M_\two,\Delta\nvDash_\two\neg B$, so $\M_\two,\Delta\nvDash_\two\Box B$ and $\M_\two,\Delta\nvDash_\two\neg\Box B$.
\end{itemize}
\end{proof}

Finally we conclude:

\begin{theorem}[Soundness and Strong Completeness]
Under Semantics \text{\rm\uppercase\expandafter{\romannumeral2}} in Definition~\ref{def.semII}, Proof System $\STVMLII$ is sound and strongly complete with respect to the class of all three-valued Kripke models--\text{\rm\uppercase\expandafter{\romannumeral2}}.
\end{theorem}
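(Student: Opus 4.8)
The plan is to read off the combined statement from the two results already established for Case~\uppercase\expandafter{\romannumeral2}: Theorem~\ref{th.soundII} gives the implication $\Gamma\vdash_\two A\implies\Gamma\vDash_\two A$, while Theorem~\ref{th.compII} gives its converse $\Gamma\vDash_\two A\implies\Gamma\vdash_\two A$, both taken over the class of three-valued Kripke models--\uppercase\expandafter{\romannumeral2}. Since ``sound'' and ``strongly complete'' name exactly these two implications, juxtaposing them yields the biconditional $\Gamma\vdash_\two A\iff\Gamma\vDash_\two A$ that the theorem asserts, so the proof itself is a one-line appeal to Theorem~\ref{th.soundII} and Theorem~\ref{th.compII}. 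For completeness of this proposal I record where the genuine work sits.

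On the soundness side, the propositional rules are inherited from weak Kleene reasoning, and the only delicate checks concern the modal rules $\Box I_1$, $\Box I_2$, $\neg\Box I$ and $\neg\Box E$ measured against the three-clause valuation $\Val^{\M_\two}_\two(s,\Box A)$. The crux here is a persistence lemma: in any model--\uppercase\expandafter{\romannumeral2}, if $\Val^{\M_\two}_\two(s,A)\neq U$ then $\Val^{\M_\two}_\two(t,A)\neq U$ for every successor $t$ with $sRt$. I would prove this by induction on $A$, lifting the defining restriction on propositional letters through the connectives via the $U$-infectiousness of weak Kleene valuation, and through $\Box$ via the observation that $\Val^{\M_\two}_\two(s,\Box B)=U$ exactly when $\Val^{\M_\two}_\two(s,B)=U$. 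Granting this lemma, soundness of $\Box I_2$ and $\neg\Box E$ is immediate, since $A\lor\neg A$ is $T$ precisely when $A$ is definite, and definiteness then propagates forward.

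On the completeness side, the canonical-model construction copies that of Theorem~\ref{th.compI} but needs two new ingredients, both of which I would verify directly. First, one must confirm that the canonical model actually lies in the restricted class, that is, $V(\Delta,p)=U$ forces $V(\Theta,p)=U$ for every predecessor $\Theta R\Delta$; this is where Rule $\Box I_2$ is consumed, pushing $\Box(p\lor\neg p)$ backwards along $R$. Second, the $\Box$ case of the Truth Lemma must be redone so as to certify the extra local clause $\Val^{\M_\two}_\two(\Delta,B)\neq U$ that Semantics~\uppercase\expandafter{\romannumeral2} demands when $\Box B\in\Delta$; here Rule $\neg\Box E$ is invoked to pass from $\Box B\lor\neg\Box B\in\Delta$ to $B\lor\neg B\in\Delta$, which pins down the value of $B$ at $\Delta$ itself, while Rule $\neg\Box I$ handles the subcase where neither $\Box B$ nor $\neg\Box B$ lies in $\Delta$.

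I expect the main obstacle to be precisely this local clause in the $\Box$ case of the Truth Lemma. Unlike Case~\uppercase\expandafter{\romannumeral1}, where $\Box B\in\Delta$ only constrains successors, Semantics~\uppercase\expandafter{\romannumeral2} forces membership of $\Box B$ to also fix the valuation of $B$ at the \emph{current} world, and the delicate point is to confirm that the rules $\neg\Box I$, $\neg\Box E$ and $\Box I_2$ are jointly strong enough to recover this information yet still sound against the persistence lemma above. Once that interplay is pinned down, the three subcases close exactly as indicated in Theorem~\ref{th.compII}, and the combined statement follows.
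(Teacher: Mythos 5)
Your proposal matches the paper exactly: the theorem's proof there is the same one-line appeal to Theorem~\ref{th.soundII} and Theorem~\ref{th.compII}, and your account of where the work lives (the canonical model belonging to the restricted class via Rule $\Box I_2$, and the reworked $\Box$ case of the Truth Lemma via Rules $\neg\Box E$ and $\neg\Box I$) agrees with the paper's proofs of those two theorems. Your forward-persistence lemma is a correct and welcome elaboration of the soundness check for $\Box I_2$, which the paper dismisses as ``easily verified.''
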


\begin{proof}
By Theorem~\ref{th.soundII} and Theorem~\ref{th.compII}.
\end{proof}

\begin{remark}
Considering that $\Box$ is interpreted as temporal modality, in fact with respect to the class of three-valued S4 Kripke models--\uppercase\expandafter{\romannumeral2}, through very similar reasoning we can establish a sound and strongly complete proof system by replacing Rule $\Box I_1$ in Proof System $\STVMLII$ with the following Rule $\Box I_1'$, and also adding the following new rule $\Box E$:

\begin{table}
\centering
\large
\renewcommand{\arraystretch}{0.9}
\setlength{\tabcolsep}{9pt}
\begin{tabular}{lc:lc}
    ($\Box I_1'$) & \makecell[l]{\normalsize from $\{\Box B\mid B\in\Gamma\}\vdash_\two A$,\\\normalsize deduce $\{\Box B\mid B\in\Gamma\}\vdash_\two\Box A$} & ($\Box E$) & $\frac{\Box A}{A}$
\end{tabular}
\end{table}
\end{remark}

\section{Conclusions and Future Work}\label{sec.con}

In this paper, we take an uncommon detour approach of interpreting three-valued weak Kleene logic by auxiliary four-valued logic, so as to obtain a deeper and clearer philosophical insight, which then guides us to evolve three-valued propositional logic into three-valued modal logic in a systematical way and spontaneously generates very natural three-valued semantics suitable for modality $\Box$. To demonstrate our method, two practical example cases are presented and analyzed in detail, with sound and strongly complete natural deduction proof systems. One case is deontic and another one is epistemic, both of which are quite interesting and popular topics in study of modal logic as well as philosophy, and our technique of three-valued modal logic provides a clean and elegant way to combine deontic or epistemic notion into temporal logic, without too much complexity to make use of multiple modalities.

Nevertheless, there still remains much work to be done in the future for further expatiation and generalization. For example, parallel to classical two-valued modal logic, concepts such as bisimulation, definability, characterization theorem, finite model property and computational complexity might apply to three-valued modal logic as well~\cite{Blackburn01}. Moreover, it also looks quite promising to wield similar measure on many-valued modal logic.

\section*{Acknowledgement}

The first author would like to thank Hajime Ishihara for useful advice and help on issues related to ituitionistic logic. The second and third authors would like to thank Hitoshi Omori, with whom prolonged philosophical discussion over many inspiring topics relevant to this paper has been carried out.

\bibliographystyle{splncs04}
\bibliography{main}

\end{document}